\newcommand{\R}{\mathbb{R}}
\newcommand{\norm}[1]{\left\lVert#1\right\rVert}
\newcommand{\sos}{\mathbb{P}[x]}
\newcommand{\ssos}{\mathbb{P}_+[x]}
\newcommand{\comment}[1]{\textcolor{purple}{#1}}
\newtheorem{thm}{Theorem}
\newtheorem{dfn}{Definition}
\newtheorem{lemma}{Lemma}
\newtheorem*{remark}{Remark}
\title{Stable Sparse Operator Inference for Nonlinear Structural Dynamics}
\author{Pascal den Boef\corref{cor1}\textsuperscript{\emph{a},}}
\ead{p.d.boef@tue.nl}
\author{Diana Manvelyan\textsuperscript{\emph{b}}}
\ead{diana.manvelyan@siemens.com}
\author{Joseph Maubach\textsuperscript{\emph{a}}}
\ead{j.m.l.maubach@tue.nl}
\author{Wil Schilders\textsuperscript{\emph{a}}}
\ead{w.h.l.schilders@tue.nl}
\author{Nathan van de Wouw\textsuperscript{\emph{a}}}
\ead{n.v.d.wouw@tue.nl}
\affiliation[1]{organization={Eindhoven University of Technology},
	city={Eindhoven},
	country={The Netherlands}}
\affiliation[2]{organization={Siemens AG},
	city={Garching},
	country={Germany}}
\begin{document}

\begin{abstract}
	Structural dynamics models with nonlinear stiffness appear, for example, when analyzing systems with nonlinear material behavior or undergoing large deformations.
	For complex systems, these models become too large for real-time applications or multi-query workflows.
	Hence, model reduction is needed.
	However, the mathematical operators of these models are often not available since, as is common in industry practice, the models are constructed using commercial simulation software.
	In this work, we propose an operator inference-based approach aimed at inferring, from data generated by the simulation model, reduced-order models (ROMs) of structural dynamics systems with stiffness terms represented by polynomials of arbitrary degree.
	To ensure physically meaningful models, we impose constraints on the inference such that the model is guaranteed to exhibit stability properties.
	Convexity of the optimization problem associated with the inference is maintained by applying a sum-of-squares relaxation to the polynomial term.
	To further reduce the size of the ROM and improve numerical conditioning of the inference, we also propose a novel clustering-based sparsification of the polynomial term.
	We validate the proposed method on several numerical examples, including a representative 3D Finite Element Model (FEM) of a steel piston rod.
\end{abstract}

\begin{keyword}
	model order reduction \sep operator inference \sep sparsity \sep stability \sep digital twin technology
\end{keyword}	

\maketitle

\section{Introduction}
In recent years, \emph{digital twins} (DT) have emerged as an important tool towards digitalization in industry.
They provide computer-aided assistance for real-world models allowing monitoring of the physical counterparts\cite{benner2020model, hartmann2018model, li2021digital}.
By collecting, storing, analyzing, and providing feedback on data, a DT enables continuous evaluation of its physical entity.
Sensors capture and continually update the DT throughout its operational life, allowing engineers and operators to maintain a live view of the physical system.
Many applications can benefit from digital twins having structural mechanics as their foundation.
The required online models are often based on partial differential equations which in turn are converted to a large number of ordinary differential equations (ODE) using methods such as finite element methods.
In this work, the type of ODE systems we consider are nonlinear structural dynamics models of the following second-order form:
\begin{equation}
	M_y \ddot{y}(t) + C_y \dot{y}(t) + f_y(y(t)) = B_y u(t),
	\label{eqn:model_f_fom}
\end{equation}
where $M_y$, $C_y$ and $B_y$ are appropriately sized matrices.
$y(t) \in \mathbb{R}^{n}$ is the displacement and $u(t) \in \mathbb{R}^{n_u}$ the input.
The function $f_y: \mathbb{R}^{n} \rightarrow \mathbb{R}^{n}$ models the stiffness force term and, depending on the application, can be nonlinear.
Such models arise, for example, when modelling deformation of nonlinear materials such as rubber, or when considering large deformations \cite{martynov2019polynomial}.

Simulating high-fidelity models of the form (\ref{eqn:model_f_fom}) can be extremely time consuming \cite{hartmann2018model, hartmann202012}.
Model order reduction (MOR) is a pivotal technology for such models enabling real-time execution of digital twins.
Moreover, MOR enables the reusability of such models for other scenarios. 
The principle of MOR is to reduce the number of degrees of freedom $n$ while keeping the essential physical properties of the model.
This is possible since, even for large $n$, the displacement $y(t)$ often evolves on a relatively low-dimensional manifold.
This low-dimensional manifold may be represented by a basis $V \in \mathbb{R}^{n \times r}$ with $r \ll n$ and corresponding coordinates $x(t) \in \mathbb{R}^{r}$ that approximate the displacement field as follows:
\begin{equation}
	y(t) \approx V x(t).
	\label{eqn:y=Vx}
\end{equation}
The dynamic model describing the evolution of $x(t)$ is
\begin{equation}
	M \ddot{x}(t) + C \dot{x}(t) + f(x(t)) = B u(t),
	\label{eqn:model_f}
\end{equation}
with $M, C, f$ and $B$ the reduced counterparts of the operators in (\ref{eqn:model_f_fom}).

In the linear case, there are well-established model reduction methods to find reduced operators of (\ref{eqn:model_f}) to approximate (\ref{eqn:model_f_fom}).
Many of these methods are \emph{intrusive}: that is, they require access to the operators of the ODE system (\ref{eqn:model_f_fom}).
For example, Guyan reduction \cite{guyan1965reduction} is one of the earliest methods for computing a reduced-order model that can accurately match the behavior of the full-order model for inputs that vary sufficiently slow (relative to the eigenfrequencies of the system).
Guyan reduction is a modal truncation method (in which only static modes are considered).
Other modal truncation methods have been developed afterwards, such as the Craig-Bampton method \cite{craig1968coupling}.
While modal truncation methods are applied in the context of structural dynamics (which is also the focus of this paper), many other model reduction methods have been developed in different communities.
For example, moment matching \cite{gragg1983partial, boley1994krylov} in the numerical mathematics community and balanced truncation \cite{moore1981principal} in the systems theory community.
For a comparison of these linear model reduction techniques from different backgrounds, see \cite{gallivan1999model, besselink2013comparison}.

The linear model reduction techniques discussed so far require a mathematical description of the full-order model.
However, often only snapshot data of the dynamics are available while the data-generating model itself is not.
For example, an explicit mathematical model of the dynamics is unavailable if commercial finite element analysis (FEA) software is used.
To address the challenge that these operators are unavailable, \emph{non-intrusive} techniques have also been developed.
For example, in \cite[Section 5.2]{filanova2023operator} an operator inference method is proposed for linear second-order models with guaranteed stability.
Dynamic Mode Decomposition (DMD) is another technique introduced in \cite{schmid2010dynamic} for linear dynamic system models and approximates the dominant eigenmodes by application of the Arnoldi method.

In literature, several methods are proposed to extend the data-driven compact modelling techniques to nonlinear systems.
DMD is generalized to nonlinear dynamics via the Koopman operator in \cite{rowley2009spectral}. 
The method of SINDy \cite{brunton2016discovering} is another method that aims at discovering models from data.
In SINDy, the basic assumption is that the nonlinear term appearing in the dynamics equations (e.g., $f_y$ in (\ref{eqn:model_f_fom})), can be written as a linear combination of a set of basis functions.
This set of basis functions may be large if little prior knowledge of the dynamics is available.
SINDy then attempts to find a sparse selection from this set of basis functions such that the resulting dynamics closely match the provided data.
The sparse selection is achieved by using an additional sparsity-promoting regularizing term in the optimization.
However, stability of the resulting model is not guaranteed.
A variant of SINDy that guarantees global stability for systems with quadratic nonlinearities is proposed in \cite{kaptanoglu2021promoting}.
Herein, the ``trapping theorem" from \cite{schlegel2015long} is used.
This theorem consists of necessary and sufficient conditions for a dynamical system to possess a bounded trapping region to which all trajectories converge.
Similarly, in \cite{sawant2023physics} a regularizing term is added that promotes stability for models with function $f_y$ consisting of quadratic and cubic nonlinear terms.
A limitation of the above methods, which we seek to address, is the restriction to quadratic nonlinearities.
In the method proposed in the current paper, we will enable handling nonlinear polynomial terms with arbitrary degree.

An alternative way of stability preservation is obtained by considering model structures that inherently preserve stability.
For first-order systems with quadratic nonlinearity, the authors of \cite{goyal2023guaranteed} develop operator inference methods based on Lyapunov analysis.
They derive a structured parametrization of the quadratic term that allows them to certify stability using a Lyapunov function.
Based on this principle, they give 3 separate operator inference methods that each enforces a different notion of stability: 1) local stability; 2) global stability; and 3) an attracting trapping region.
However, the resulting optimization problem that needs to be solved to infer the operators is non-convex.
The operator inference method we propose in this work is based on convex optimization, which is computationally advantageous.
Another work that uses energy-based arguments to guarantee stability is \cite{sharma2024preserving}, where the Lagrangian structure of second-order systems is considered.
Such structure then automatically implies stability of the dynamics.
Similarly, the work in \cite{sharma2022hamiltonian} preserves the Hamiltonian structure of a model.
However, the methods based on such a structure either rely on precise knowledge of the energy function or are restricted to special cases (such as quadratic energy functions).
In our method, no such prior knowledge is required, nor do such restrictions apply.
We consider a more general form of the nonlinearity $f$ consisting of polynomials with arbitrary degree.
Furthermore, we will provide global stability properties, as opposed to stability on a bounded subset of state-space.
For an overview of recent work on learning models from data using operator inference, consult the review paper \cite{kramer2024learning}.

Another data-driven approach to discover stable models from data is \cite{kolter2019learning} using deep neural networks to model dynamics.
In this method, a deep neural network is jointly learned with a Lyapunov function.
The deep neural network function $h(x)$ describes the dynamics according to $\dot{x} = \mathrm{Proj}(h(x))$, where $\mathrm{Proj}$ is a projection operator that is governed by the Lyapunov function such that stability is guaranteed.
The authors demonstrate the capabilities of the method by learning the stable video texture of a burning flame.
Although deep learning neural networks are effective tools in these types of settings (high-dimensional image data and little prior knowledge on the governing physics), selecting the appropriate model architecture can be challenging.
This challenge is further increased because the authors use a nonlinear Variational Autoencoder to provide dimensionality reduction from $y(t)$ to $x(t)$.
This is in contrast to most operator inference techniques, which use Proper Orthogonal Decomposition (POD) \cite{berkooz1993proper,lu2019review} to find a linear projection from $y(t)$ to $x(t)$ as in (\ref{eqn:y=Vx}).
In addition to the challenge posed by selecting an interpretable and effective architecture, the optimization problem to be solved during the learning process is typically non-convex and the learned Lyapunov function has no physical interpretability.

The authors of \cite{martynov2019polynomial} also treat model reduction for nonlinear models of the form (\ref{eqn:model_f_fom}).
Specifically, the authors focus on the case that the elements of $f_y(y(t))$ are monomials of $y(t)$ up to degree 3.
After an initial reduction of the dynamics using POD, the elements of $f(x(t))$ in (\ref{eqn:model_f}) are again monomials of $x(t)$ up to degree 3.
However, the authors note that the number of monomials grows rapidly even for moderate $r$, 
This rapid growth limits the computational scalability of such a model structure.
To resolve this issue, the authors propose to use a sparse selection of monomial terms.
The selection is made in a data-driven manner based on an ordering of the monomials computed from the snapshot data.
In this way, the conditioning and scalability of the problem are improved.
Inspired by this, we also introduce a method to incorporate sparsity in the models.
We design an algorithm that performs the sparse selection in such a way that it is compatible with the stability constraints we aim to enforce.
Furthermore, the numerical experiments in \cite{martynov2019polynomial} demonstrate that, for some configurations, the resulting model is indeed unstable.
This further justifies the need for stability guarantees.

Concluding, the current methods in literature to discover reduced-order nonlinear structural mechanics models from data are typically restricted to quadratic or cubic nonlinearities, do not consider stability constraints or are not formulated in terms of convex optimization problems.
The contribution of this paper is to propose a method to simultaneously address these three shortcomings. Specifically, the method presented here has the following characteristics:

\begin{itemize}
	\item The method infers a reduced-order model (\ref{eqn:model_f}) for dynamics of the form (\ref{eqn:model_f_fom}) with stability guarantees.
	Moreover, it is non-intrusive, i.e., it requires no access to the operators of (\ref{eqn:model_f_fom}).
	\item The method is formulated in terms of a semidefinite program (SDP) with a cost function that penalizes mismatch between the inferred model and the available snapshot data.
	Owing to convexity of the formulation, the inferred operators are globally optimal. 
	Furthermore, the optimization problem can be solved using standard SDP solvers.
	\item The method can be applied to nonlinearities with polynomials of arbitrary degree. 
	To decrease the computational load if the polynomial degree becomes large, a sparse representation of the nonlinearity is proposed.
\end{itemize}

The rest of this paper is organized as follows.
In Section \ref{sct:stabopinf}, we formalize the concept of stability and develop a convex optimization-based approach that infers the reduced operators appearing in (\ref{eqn:model_f}) guaranteeing stability.
To increase computational scalability of the method, in Section \ref{sct:hyperreduction} a hyper-reduction technique is proposed that efficiently selects a sparse selection of monomial terms to represent the nonlinearity.
The method is numerically tested on two examples in Section \ref{sct:examples}.
Finally, conclusions and discussions are given in Section \ref{sct:conclusion}.

Throughout the paper, we use the following notation:
\begin{itemize}
	\item $\norm{ \cdot }$ is the standard Euclidean norm for a vector argument, and the spectral norm for a matrix argument.
	\item $\norm{ s }_{\infty}$ for a signal $s: [0, \infty) \rightarrow \mathbb{R}^{n_s}$ (with $n_s$ the signal dimension) is the supremum norm, i.e., $\norm{s}_\infty := \sup_{t \in [0, \infty)} \norm{s(t)}$.
	\item A $\mathcal{K}$-function is a function $\alpha: [0, \infty) \rightarrow [0, \infty)$ that is continuous, strictly increasing and $\alpha(0) = 0$.
	\item A $\mathcal{K}_\infty$-function $\alpha$ is a $\mathcal{K}$-function with $\lim_{s \rightarrow \infty} \alpha(s) = \infty$. 
	\item A $\mathcal{KL}$-function is a function $\beta: [0, \infty) \times [0, \infty) \rightarrow [0, \infty)$ if $\beta(\cdot, t)$ is a $\mathcal{K}$-function for each $t \geq 0$ with $\lim_{t \rightarrow \infty} \beta(s, t) = 0$ for each $s \geq 0$.
	\item The cardinality of a set $a$ is denoted by $| a |$.
\end{itemize}

\section{Stabilized Operator Inference}
\label{sct:stabopinf}
The operators $M$, $C$, $B$ and $f$ of the reduced-order model structure (\ref{eqn:model_f}) should be inferred from a dataset of snapshots of $y(t)$ and $u(t)$.
Note that in experimental settings, it is unreasonable to assume access to snapshots of the full displacement field $y(t)$ (usually, only a limited set of sensor measurements are available).
However, in this paper we focus on the setting where the datasets originate from a high-fidelity simulation model of the system.
In this context, obtaining such snapshots is feasible (whereas obtaining the operators remains infeasible due to software restrictions).

Because learning dynamics directly from data can lead to unstable reduced-order models \cite{martynov2019polynomial}, in this section we develop a convex constraint based on sum-of-squares polynomials which, if satisfied, ensures that the inferred system exhibits certain stability properties.
We start by making precise these stability properties.
In Section~\ref{sct:stab_concepts}, we introduce separate stability properties of the {homogeneous system} (when $u(t) = 0$) and the non-homogeneous system (when $u(t) \neq 0$).
In Section~\ref{sct:comp_model_struct} we propose a model structure that enables the computational verification of these stability properties which is then used to formulate the stabilized operator inference formulation in Section~\ref{sct:opinf_form}.
 
\subsection{Stability concepts}~\label{sct:stab_concepts}
We will work with two main stability properties.
For more background information on these and other notions of stability, consult, e.g., \cite{khalil2002}.
The full state of second-order systems of the form (\ref{eqn:model_f}) is composed of the displacement $x(t)$ and the velocity $\dot{x}(t)$.
The stability theory introduced in this section relies heavily on the full state, for which we introduce the following notation:
\begin{equation}
	z(t) := \begin{bmatrix} x(t)^\mathrm{T} & \dot{x}(t)^\mathrm{T} \end{bmatrix}^\mathrm{T} \in \mathbb{R}^{2r}.
	\label{eqn:z}
\end{equation}

The first notion of stability concerns the boundedness of $z(t)$ in the homogeneous case ($u(t) = 0$).

\begin{dfn}[Global uniform boundedness]
\cite[Definition 4.6]{khalil2002} 
The solutions of (\ref{eqn:model_f}) are globally uniformly bounded if, for $u(t) = 0$, given an arbitrary initial condition $z(0)$ we have that there exists a constant $C$ such that $\norm{ z(t) } \leq C$ for all time $t \geq 0$.
\end{dfn}

We will next introduce the notion of \emph{input-to-state stability} (ISS) which is also applicable to the non-homogeneous case ($u(t) \neq 0$).
\begin{dfn}[Input-to-state stability (ISS)]
\cite[Definition 2.1]{sontag1995characterizations}
The system (\ref{eqn:model_g}) is input-to-state stable if there exists a $\mathcal{KL}$-function $\beta$ and a $\mathcal{K}$-function $\gamma$ such that
\begin{equation}
	\norm{ z(t) } \leq \beta( \norm{ z(0) }, t ) + \gamma( \norm{ u }_\infty ).
\end{equation}
\end{dfn}

Note that ISS implies global uniform boundedness of all solutions (and global asymptotic stability of the origin) of (\ref{eqn:model_f}) if $u(t) = 0$, but the converse does not hold.
Furthermore, as ISS holds for arbitrary inputs (provided $\norm{u}_\infty < \infty$), it is a more powerful and robust property considering that the modeling goal is typically to query the response of the system for many different inputs.
Still, it is useful to consider both stability properties because, as we will see in the sequel, there may be scenarios where imposing ISS is infeasible, whereas global uniform boundedness of the solutions is still feasible.
In particular, this will be the case for systems (\ref{eqn:model_f}) for which $C = 0$ (i.e., in the absence of dissipation).

\begin{remark}
For the homogeneous case ($u(t) = 0$), ISS stability implies global asymptotic stability of the origin ($z(t) = 0$) \cite{sontag1996new}.
\end{remark}

The above stability properties can be verified using (ISS) Lyapunov functions \cite{khalil2002,sontag1995characterizations}, which we will exploit below in the scope of operator inference with stability guarantees.
The following definitions are used frequently in the context of (ISS) Lyapunov analysis:
\begin{dfn}[Radially unbounded function]
A function $\theta: \mathbb{R}^{n_\theta} \rightarrow \mathbb{R}$ is radially unbounded if $\theta(x) \rightarrow \infty$ whenever $\norm{x} \rightarrow \infty$.
\end{dfn}

\begin{dfn}[Positive (semi)definite function]
A function $\theta: \mathbb{R}^{n_\theta} \rightarrow \mathbb{R}$ is positive (semi)definite if $\theta(0) = 0$ and $\theta(x) > 0$ ($\theta(x) \geq 0$) for all $x \neq 0$.
\end{dfn}

\begin{dfn}[Negative (semi)definite function]
A function $\theta: \mathbb{R}^{n_\theta} \rightarrow \mathbb{R}$ is negative (semi)definite if $\theta(0) = 0$ and $\theta(x) < 0$ ($\theta(x) \leq 0$) for all $x \neq 0$.
\end{dfn}

Without any additional assumptions on the nonlinearity $f$ in (\ref{eqn:model_f}), finding a Lyapunov function to guarantee certain stability properties of (\ref{eqn:model_f}) may in general be challenging.
However, we are considering physical systems in this work arising from structural dynamics.
Hence, the principle of Lagrangian mechanics typically provides us with a formulation of $f(x) = \nabla g(x)$ on the basis of a potential as follows (see, e.g., \cite{lurie2013analytical}, \cite[Section 2.1]{ortega1998} and \cite[Section 2.1.1]{sharma2024preserving}):
\begin{equation}
\begin{aligned}
    M \ddot{x} + C \dot{x} + \nabla \left( g(x) \right) = B u,
\end{aligned}
\label{eqn:model_g}
\end{equation}
where $g: \mathbb{R}^{r} \rightarrow \mathbb{R}$ is a nonlinear differentiable function representing the potential energy in the system.
Since this term encompasses potential energy, it is a natural part of Lyapunov functions certifying stability.

Next, we give a possible characterization of a Lyapunov function certifying global uniform boundedness of the solutions of (\ref{eqn:model_f}).

\begin{lemma} \label{lemma:lyapstab}
Solutions of (\ref{eqn:model_f}) are globally uniformly bounded if there exists a continuously differentiable Lyapunov function $V: \mathbb{R}^{2r} \rightarrow [0, \infty)$ such that the following conditions hold:
\begin{enumerate}
	\item[I)] $V$ is radially unbounded.
	\item[II)] $V$ is positive semidefinite.
	\item[III)] $\dot{V}$ is negative semidefinite.
\end{enumerate}
\end{lemma}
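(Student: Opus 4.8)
The plan is to use the standard Lyapunov sublevel-set argument: along trajectories of the homogeneous system the candidate $V$ cannot increase, so each trajectory is confined to a sublevel set of $V$, which radial unboundedness forces to be bounded. Throughout I fix $u(t) = 0$, so that (\ref{eqn:model_f}) becomes an autonomous system in the full state $z(t)$ from (\ref{eqn:z}); the symbol $\dot V$ in condition III) is read as the derivative of $V$ along this homogeneous vector field, $\dot V(z) = \nabla V(z)^\mathrm{T} \dot z$.

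First I would fix an arbitrary $z(0)$ and let $z(t)$ denote the corresponding maximal solution on $[0, T)$. By continuous differentiability of $V$ and the chain rule, $\frac{d}{dt} V(z(t)) = \dot V(z(t)) \leq 0$ for all $t$ by condition III), so that $t \mapsto V(z(t))$ is non-increasing and $V(z(t)) \leq V(z(0)) =: c$ on $[0, T)$. Hence the trajectory stays inside the sublevel set $\Omega_c := \{ z \in \R^{2r} : V(z) \leq c \}$.

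Next I would show that $\Omega_c$ is compact. It is closed by continuity of $V$, and bounded by radial unboundedness: were it unbounded there would exist $z_k \in \Omega_c$ with $\norm{z_k} \to \infty$ while $V(z_k) \leq c$, contradicting condition I). Compactness of $\Omega_c$ is exactly what rules out a finite escape time, so the maximal solution is in fact defined for all $t \geq 0$ (i.e., $T = \infty$). Setting $C := \sup_{z \in \Omega_c} \norm{z} < \infty$ then yields $\norm{z(t)} \leq C$ for all $t \geq 0$. Because this bound depends only on $z(0)$ and the autonomous dynamics carry no dependence on the initial time, this is precisely global uniform boundedness.

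I expect the main subtlety --- rather than any hard computation --- to be the argument that trajectories exist globally: a polynomial $f$ (equivalently, a polynomial gradient $\nabla g$ in (\ref{eqn:model_g})) can in principle produce finite escape times, so one must invoke the confinement to the compact set $\Omega_c$ together with the standard continuation theorem to conclude $T = \infty$ before the bound $\norm{z(t)} \leq C$ is meaningful for all time. Condition II) is not strictly needed to bound the sublevel sets, but it anchors $V \geq 0$ with $V(0) = 0$, keeping the statement within the usual Lyapunov setting and preparing the ground for the stronger stability properties treated afterward.
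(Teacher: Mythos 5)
Your proof is correct and follows essentially the same sublevel-set argument as the paper: the trajectory is confined to $\{z : V(z) \leq V(z(0))\}$, which radial unboundedness forces to be bounded. The only difference is that you additionally make explicit the continuation argument ruling out finite escape time, a point the paper's proof leaves implicit.
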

\begin{proof}
Pick any $z(0) \in \mathbb{R}^{2r}$.
Consider the set $V_0 := \{ y | V(y) \leq V(z(0)) \}$.
Because $V$ is continuous and radially unbounded, the set $V_0$ is bounded (i.e., there exists $M > 0$ such that $\norm{y} \leq M$ for all $y \in V_0$).
Since $\dot{V}(z(t))$ is non-positive, $z(t) \in V_0$ for all $t \geq 0$ and, hence, $\norm{z(t)}$ is bounded.
\end{proof}
Next, we exploit Lemma~\ref{lemma:lyapstab} to derive conditions on the operators of (\ref{eqn:model_g}) such that the solutions are guaranteed to be globally uniformly bounded (for $u(t) = 0$).

\begin{thm}\label{thm:lyapstab_g}
Solutions of (\ref{eqn:model_g}) with $u(t) = 0$ are globally uniformly bounded if the following conditions are satisfied:
\begin{enumerate}
	\item[I)] $M = M^\mathrm{T} \succ 0$.
	\item[II)] $C = C^\mathrm{T} \succeq 0$.
	\item[III)] $g$ is radially unbounded and positive semidefinite.
\end{enumerate}
\end{thm}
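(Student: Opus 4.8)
The plan is to apply Lemma~\ref{lemma:lyapstab} with the total mechanical energy as the candidate Lyapunov function. Specifically, I would take
\[
V(z) = \tfrac{1}{2}\dot{x}^\mathrm{T} M \dot{x} + g(x),
\]
the sum of kinetic and potential energy, where $z = [x^\mathrm{T}\ \dot{x}^\mathrm{T}]^\mathrm{T}$ as in (\ref{eqn:z}). This $V$ is continuously differentiable (inheriting differentiability from $g$ and the smooth quadratic term) and maps into $[0,\infty)$, since condition I ($M \succ 0$) makes the kinetic term nonnegative and condition III makes $g$ nonnegative. The rest of the proof amounts to checking the three hypotheses of Lemma~\ref{lemma:lyapstab}.

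For positive semidefiniteness of $V$, I would observe that both terms are nonnegative, so $V(z) \geq 0$, and that $g(0) = 0$ (part of $g$ being positive semidefinite) gives $V(0) = 0$. For radial unboundedness, the argument is that whenever $\norm{z} \to \infty$, at least one of $\norm{x}$ or $\norm{\dot{x}}$ must diverge: the kinetic term is bounded below by $\tfrac{1}{2}\lambda_{\min}(M)\norm{\dot{x}}^2$, which grows without bound in $\norm{\dot{x}}$, while $g(x)$ grows without bound in $\norm{x}$ by assumption III. Since neither term is ever negative, the sum diverges, so $V$ is radially unbounded.

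The crux is the non-positivity of $\dot{V}$ along trajectories of (\ref{eqn:model_g}) with $u = 0$. Differentiating gives $\dot{V} = \dot{x}^\mathrm{T} M \ddot{x} + \nabla g(x)^\mathrm{T} \dot{x}$. Substituting $M\ddot{x} = -C\dot{x} - \nabla g(x)$ from the dynamics, the two potential-gradient terms cancel and I am left with $\dot{V} = -\dot{x}^\mathrm{T} C \dot{x} \leq 0$ by condition II ($C \succeq 0$). This cancellation is the key mechanism: the conservative force $\nabla g$ produces no net change in total energy, so only the dissipative contribution survives, and its sign is controlled by the semidefiniteness of $C$. With all three hypotheses of Lemma~\ref{lemma:lyapstab} verified, global uniform boundedness follows immediately.

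I expect the main obstacle to be phrasing the radial unboundedness argument carefully, since radial unboundedness of $g$ alone controls only the displacement directions of state space; one must combine it with the coercivity of the kinetic term in the velocity directions to conclude radial unboundedness of $V$ on all of $\mathbb{R}^{2r}$. The energy computation itself is routine once the correct $V$ has been identified, and the essential idea is simply recognizing that total mechanical energy is the natural Lyapunov function for the Lagrangian structure (\ref{eqn:model_g}).
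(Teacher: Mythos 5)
Your proposal is correct and follows essentially the same route as the paper: the total mechanical energy $V = \tfrac{1}{2}\dot{x}^\mathrm{T} M \dot{x} + g(x)$ as Lyapunov candidate, the cancellation of the conservative term yielding $\dot{V} = -\dot{x}^\mathrm{T} C \dot{x} \preceq 0$, and an appeal to Lemma~\ref{lemma:lyapstab}. Your treatment of radial unboundedness is in fact slightly more explicit than the paper's.
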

\begin{proof}
Define the following function:
\begin{equation}
	V(x, \dot{x}) = \frac{1}{2} \dot{x}^\mathrm{T} M \dot{x} + g(x).
	\label{eqn:V_FOM_lyapstab}
\end{equation}
$V$ is radially unbounded and positive semidefinite since both terms are independently radially unbounded and positive semidefinite in $x(t)$ and $\dot{x}(t)$, respectively.
Furthermore, the time derivative of $V$ along trajectories of the system is given by
\begin{equation}
\begin{aligned}
	\dot{V}(x, \dot{x}) & = \dot{x}^\mathrm{T} M \ddot{x} + \dot{x}^\mathrm{T} \nabla (g(x) ) \\
		& = \dot{x}^\mathrm{T} \left( M \ddot{x} + \nabla (g(x)) \right) \\
		& = -\dot{x}^\mathrm{T} C \dot{x} \preceq 0.
\end{aligned}
\end{equation}
Using Lemma~\ref{lemma:lyapstab}, we can conclude global uniform boundedness of solutions.
\end{proof}

To analyse ISS, we exploit the concept of an ISS-Lyapunov function.
The next result provides a sufficient condition for ISS of system~\eqref{eqn:model_g}.

\begin{thm}\label{thm:iss_g}
The system (\ref{eqn:model_g}) is ISS if the following conditions are satisfied:
\begin{enumerate}
	\item[I)] $M = M^\mathrm{T} \succ 0$.
	\item[II)] $C = C^\mathrm{T} \succ 0$.
	\item[III)] $g$ is positive definite.
	\item[IV)] $x^\mathrm{T} \nabla( g(x) ) \geq g_0 \norm{x}^2 $ for some $g_0 > 0$ and all $x \in \mathbb{R}^{n}$.
\end{enumerate}
\end{thm}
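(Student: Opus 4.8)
The plan is to exhibit an ISS-Lyapunov function in the sense of \cite{sontag1995characterizations}, i.e., a continuously differentiable $V: \R^{2r} \to [0,\infty)$ admitting $\mathcal{K}_\infty$ sandwich bounds $\alpha_1(\norm{z}) \le V(z) \le \alpha_2(\norm{z})$ together with a dissipation inequality of the form $\dot V \le -\alpha_3(\norm{z}) + \sigma(\norm{u})$. The obvious candidate is the mechanical energy $\tfrac12 \dot x^\mathrm{T} M \dot x + g(x)$ used in Theorem~\ref{thm:lyapstab_g}, but as computed there its derivative equals $-\dot x^\mathrm{T} C \dot x$, which vanishes on the entire subspace $\dot x = 0$ and therefore cannot dominate any positive term in $x$. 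To repair this I would add the classical cross term and take
\[
V(x,\dot x) = \tfrac12 \dot x^\mathrm{T} M \dot x + g(x) + \epsilon\, x^\mathrm{T} M \dot x,
\]
with $\epsilon > 0$ a small parameter fixed at the end.

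First I would establish the $\mathcal{K}_\infty$ bounds. Condition IV yields a quadratic lower bound on $g$: integrating $\tfrac{d}{ds} g(sx) = \tfrac1s (sx)^\mathrm{T} \nabla g(sx) \ge \tfrac{g_0}{s}\norm{sx}^2$ over $s \in [0,1]$ and using $g(0)=0$ gives $g(x) \ge \tfrac{g_0}{2}\norm{x}^2$, so in particular $g$ is radially unbounded. Completing the square turns $V$ into $\tfrac12 (\dot x + \epsilon x)^\mathrm{T} M (\dot x + \epsilon x) + g(x) - \tfrac{\epsilon^2}{2} x^\mathrm{T} M x$, so whenever $\epsilon^2 < g_0 / \lambda_{\max}(M)$ the residual bracket is bounded below by a positive multiple of $\norm{x}^2$ and $V$ is positive definite and radially unbounded. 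The existence of class $\mathcal{K}_\infty$ functions $\alpha_1, \alpha_2$ then follows from the standard result \cite[Lemma 4.3]{khalil2002}.

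Next I would differentiate $V$ along trajectories of (\ref{eqn:model_g}), substituting $M\ddot x = Bu - C\dot x - \nabla g(x)$. The two scalar terms $\pm\,\dot x^\mathrm{T}\nabla g(x)$ cancel, leaving
\[
\dot V = -\dot x^\mathrm{T}(C - \epsilon M)\dot x - \epsilon\, x^\mathrm{T}\nabla g(x) - \epsilon\, x^\mathrm{T} C \dot x + (\dot x + \epsilon x)^\mathrm{T} B u.
\]
Since $C, M \succ 0$, for small $\epsilon$ we have $C - \epsilon M \succ 0$, bounding the first term by $-c_2\norm{\dot x}^2$; condition IV gives $-\epsilon\, x^\mathrm{T}\nabla g(x) \le -\epsilon g_0 \norm{x}^2$; and Young's inequality lets me absorb the indefinite cross term $-\epsilon\, x^\mathrm{T} C \dot x$ and split the forcing $(\dot x + \epsilon x)^\mathrm{T} B u$ into arbitrarily small multiples of $\norm{x}^2$ and $\norm{\dot x}^2$ plus a multiple of $\norm{u}^2$. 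Choosing $\epsilon$ and the Young weights small enough leaves $\dot V \le -c\norm{z}^2 + d\norm{u}^2$ with $c, d > 0$, which is the required dissipation inequality; ISS then follows from the ISS-Lyapunov theorem.

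The main obstacle is the simultaneous tuning of $\epsilon$: it must be small enough that (a) $V$ remains positive definite, which needs $\epsilon^2 < g_0/\lambda_{\max}(M)$, and (b) after absorbing the cross term and forcing, a strictly negative quadratic in both $x$ and $\dot x$ survives. Here condition IV does double duty — it both furnishes the quadratic lower bound on $g$ guaranteeing coercivity of $V$ and supplies the $-\epsilon g_0\norm{x}^2$ term needed to beat the indefinite cross coupling between $x$ and $\dot x$. This is precisely why IV is strengthened from the mere positive semidefiniteness used in Theorem~\ref{thm:lyapstab_g}: without the coercivity it provides, the dissipation in $x$ collapses and only the weaker global-boundedness conclusion remains available.
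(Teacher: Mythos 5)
Your proof is correct, and at its core it uses the same device as the paper: an ISS-Lyapunov function built from the mechanical energy $\tfrac12 \dot x^\mathrm{T} M \dot x + g(x)$ augmented by the cross term $x^\mathrm{T} M \dot x$. The difference is in execution. The paper does not carry out the computation; it invokes Theorem 3 of Aleksandrov et al.\ with the Lyapunov function $V = h\bigl(\tfrac12 \dot x^\mathrm{T} M \dot x + g(x)\bigr) + \tfrac12 x^\mathrm{T} C x + x^\mathrm{T} M \dot x$, which after division by $h$ is exactly your candidate with $\epsilon = 1/h$ plus the harmless additional positive term $\tfrac{1}{2h} x^\mathrm{T} C x$; your smallness condition on $\epsilon$ plays the role of the paper's feasibility requirement $hC - M \succ 0$. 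What your version buys is self-containedness and transparency: the integration argument giving $g(x) \geq \tfrac{g_0}{2}\norm{x}^2$ from condition IV, the completion of squares yielding the $\mathcal{K}_\infty$ sandwich bounds (correctly noting that the completed square must be combined with the surviving $\tfrac12\bigl(g_0 - \epsilon^2 \lambda_{\max}(M)\bigr)\norm{x}^2$ term to get coercivity in $z$, since the square alone degenerates on $\dot x = -\epsilon x$), and the explicit Young-inequality bookkeeping leading to $\dot V \leq -c\norm{z}^2 + d\norm{u}^2$ are all sound, and the order of quantifiers in tuning $\epsilon$ and the Young weights works out as you describe because the cross-term penalties scale with $\epsilon$ while the dissipation margin $\lambda_{\min}(C - \epsilon M)$ does not vanish as $\epsilon \to 0$. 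Your closing remark that condition IV does double duty (coercivity of $V$ and dissipation in $x$) is exactly the right explanation for why IV is needed beyond the hypotheses of Theorem~\ref{thm:lyapstab_g}.
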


\begin{proof}
The theorem is a variation of \cite[Theorem 3]{aleksandrov2022iss} with the following differences.

First, we set $2 \eta(\norm{x})$ introduced in \cite[Theorem 3]{aleksandrov2022iss} as follows: $2 \eta(\norm{x}) = g_0 \norm{x}$. This is allowed since $\eta$ should be a $\mathcal{K}_\infty$-function, and $g_0 \norm{x}$ is, indeed, a $\mathcal{K}_\infty$-function.

Second, we allow $M \neq I$.
To accommodate this, we incorporate $M$ into the ISS-Lyapunov function introduced in (6) of \cite{aleksandrov2022iss} as follows:
\begin{equation}
	V = h \left( \dfrac12 \dot{x}^\mathrm{T} \textcolor{red}{M} \dot{x} + g(x) \right) + \dfrac12 x^\mathrm{T} C x + x^\mathrm{T} \textcolor{red}{M} \dot{x},
	\label{eqn:V_iss}
\end{equation}
with $h > 0$.
In~\eqref{eqn:V_iss}, the symbols $h, g(x)$ and $C$ correspond to $h, G(x)$ and $\mathbb{W}$ in (6) of \cite{aleksandrov2022iss}, respectively.
Subsequently, $M$ is also added to the condition on $h$ such that $h \mathbb{W} - I > 0$ in the proof of \cite[Theorem 3]{aleksandrov2022iss}, which becomes $h C - M \succ 0$ in the scope of the current paper.
Note this condition is still feasible because, as in \cite{aleksandrov2022iss}, we have $C = C^\mathrm{T} \succ 0$.
The rest of the proof may be carried out as in \cite{aleksandrov2022iss} using the modified ISS-Lyapunov function~\eqref{eqn:V_iss} and noting that $M = M^\mathrm{T} \succ 0$.
\end{proof}

\subsection{Computationally tractable model structure}~\label{sct:comp_model_struct}
In the previous sections, we have provided a set of sufficient conditions under which the model structure (\ref{eqn:model_g}) has globally uniformly bounded solutions when $u(t) = 0$ (Theorem \ref{thm:lyapstab_g}) or is ISS (Theorem \ref{thm:iss_g}).
In this section, we discuss a computationally tractable way of enforcing these conditions in an operator inference context.
The operator inference problem is usually posed as a convex optimization problem \cite{peherstorfer2016data, qian2020lift}.
Convexity (which arises from the fact that the operators in the dynamics equations are linear) is an attractive property of such problems because any local minimizer is a global minimizer and efficient computational tools exist for solving a wide range of convex problems \cite{boyd2004convex}.
Given these advantages of a convex problem statement, we will also aim for a convex formulation of the operator inference problem for model structures of the form (\ref{eqn:model_g}).
This is challenged by the nonlinearity present in (\ref{eqn:model_g}).
It can immediately be seen that the mass matrix $M$, the damping matrix $C$ and the input-to-state matrix $B$ appear affinely in the dynamics equation (\ref{eqn:model_g}).
Furthermore, the conditions on these operators for stability as given in Theorems \ref{lemma:lyapstab} and \ref{thm:iss_g} are positive (semi)definite constraints.
What remains is a suitable parametrization of the nonlinear term $g(x)$ that 1) is sufficiently general to describe a wide range of nonlinear phenomena; and 2) allows a convex formulation of the operator inference problem.

To achieve this, we will parametrize $g(x)$ as a linear combination of monomials and constrain the resulting linear combination to admit a \emph{sum-of-squares decomposition}. We will explain this in more details by first introducing the definition of a sum-of-squares polynomial.

\begin{dfn}[Sum-of-squares polynomial]
A polynomial $\tau$ is a sum-of-squares polynomial if and only if it can be written in the form
\begin{equation}
	\tau(x) = \sum_{i=1}^{n} \tau_i(x)^2,
\end{equation}
with $\tau_i$, $i = 1, ..., n$, polynomials. The space of all sum-of-squares polynomials (in the variable $x$) is denoted by $\sos$.
\end{dfn}

It is clear that any sum-of-squares polynomial $\tau(x)$ is also nonnegative, but not necessarily positive definite.
We therefore introduce a second definition which includes positive definiteness.

\begin{dfn}[Strictly positive sum-of-squares polynomial]
A polynomial $\tau(x) \in \sos$ is a strictly positive sum-of-squares polynomial if and only if $\tau(x) > 0$ whenever $x \neq 0$. The space of all strictly positive sum-of-squares polynomials (in the variable $x$) is denoted by $\ssos$.
\end{dfn}

Considering the set of sum-of-squares polynomials is attractive from a computational point of view because checking whether a given polynomial is sum-of-squares is equivalent to solving a semidefinite program \cite{parrilo2000structured}.
It is important to note that, in general, the set of nonnegative polynomials is much larger than the set of sum-of-squares polynomials.
However, the good news is that it was shown by \cite{lasserre2007sum} that any nonnegative polynomial $p(x)$ may be approximated arbitrarily closely by a sum-of-squares polynomial $\tau(x)$ (in terms of the $\ell_1$ norm of the coefficients of $\tau(x) - p(x)$).

Motivated by the characterization of sum-of-squares polynomials by semidefinite programs, we assume that the nonlinear function $g$ in (\ref{eqn:model_g}) can be well approximated by a polynomial.
More specifically, we assume that
\begin{equation}
	g(x) \approx k^\mathrm{T} \phi(x),
	\label{eqn:g_k_phi}
\end{equation} 
where the nonlinear mapping $\phi: \R^{n} \rightarrow \R^{n_\phi}$ is a column vector of $n_\phi$ monomials in $x$, i.e.,
\begin{equation}
	\phi(x) = \begin{bmatrix} \prod_{i=1}^{n} x_i^{d_{1,i}} \\ \vdots \\ \prod_{i=1}^{n} x_i^{d_{n_\phi,i}} \end{bmatrix} ,
\end{equation}
where $x_i$, $i = 1, ..., n$, denotes the i-th component of $x$ and $d_{j,i}$, $j = 1, ..., n_\phi$, are non-negative integers.
The column vector $k \in \R^{n_\phi}$ are coefficients of the polynomial $k^\mathrm{T} \phi(x)$ and represent the parametrization of the nonlinear operator that should be inferred.
Substituting (\ref{eqn:g_k_phi}) in (\ref{eqn:model_g}) yields the model structure
\begin{equation}
\begin{aligned}
    M \ddot{x} + C \dot{x} + \nabla \left( k^\mathrm{T} \phi(x) \right) = B u.
\end{aligned}
\label{eqn:model_polynomial}
\end{equation}
The dynamics equation (\ref{eqn:model_polynomial}) is now linear in the operators-to-be-inferred ($M, C, k$ and $B$).
In addition, we relax the positivity constraints of the polynomials $k^\mathrm{T} \phi(x)$ and $x^\mathrm{T} \nabla( k^\mathrm{T} \phi(x) )$ (corresponding to conditions III and IV of Theorem \ref{thm:iss_g}, respectively) by instead finding $k$ such that these polynomials are (strictly) positive sum-of-squares, i.e.:
\begin{equation}
\begin{aligned}
	k^\mathrm{T} \phi(x) & \in \ssos, \\
	x^\mathrm{T} \nabla( k^\mathrm{T} \phi(x)) - \varepsilon \norm{x}^2 & \in \sos,
\end{aligned}
\label{eqn:model_sos_constraints}
\end{equation}
with $\varepsilon$ a small positive constant.

The sum-of-squares constraints (\ref{eqn:model_sos_constraints}) enable the convex parametrization of reduced-order models (ROMs) that exhibit the stability properites introduced in this section.
To summarize, starting from the nonlinear structural dynamics model (\ref{eqn:model_f}), we made the following adaptations:
\begin{enumerate}
	\item First, we let $f(x) = \nabla g(x)$ to arrive at model structure (\ref{eqn:model_g}). This simplifies finding candidate (ISS-)Lyapunov functions for the model (Theorems \ref{thm:lyapstab_g} and \ref{thm:iss_g}), and is motivated by the framework of Lagrangian mechanics.
	\item Second, we assume that the dynamics can be well approximated using $g(x) \approx k^\mathrm{T} \phi(x)$ to arrive at model structure (\ref{eqn:model_polynomial}). This allows us to use a sum-of-squares relaxation to obtain computationally tractable global stability constraints as in (\ref{eqn:model_sos_constraints}).
\end{enumerate}
In the next subsection, the model structure (\ref{eqn:model_g}) developed in this section is combined with data snapshots to arrive at the operator inference formulation.

\subsection{Formulation of the operator inference problem}~\label{sct:opinf_form}
The modeling goal is to find suitable operators of the model structure (\ref{eqn:model_g}) that can accurately reconstruct a simulated or measured dataset and extrapolate to unseen loading profiles.
Note that we consider a non-intrusive setting, which means that we do not have access to the simulation code that generated the dataset, nor do we know the precise details of the governing equations.
It is possible (and given the high complexity of modern FEM codes) that these governing equations cannot be exactly represented using model structure (\ref{eqn:model_g}).
The idea is, however, that model (\ref{eqn:model_g}) are sufficiently general to capture the relevant dynamics.

We assume that snapshots of the displacement coordinates $y(t_i)$, at time instants $t_i$, $i = 1, ..., N$, are available.
These are collected in a snapshot matrix
\begin{equation}
    \mathcal{Y} := \begin{bmatrix} y(t_1) & y(t_2) & \cdots & y(t_N) \end{bmatrix} \in \mathbb{R}^{n \times N}
    \label{eqn:Y}
\end{equation}
along with an accompanying snapshot matrix of the corresponding input:
\begin{equation}
    \mathcal{U} := \begin{bmatrix} u(t_1) & u(t_2) & \cdots & u(t_N) \end{bmatrix} \in \mathbb{R}^{n_u \times N}.
    \label{eqn:U}
\end{equation}
In this work, we apply POD to find the low-dimensional subspace $V$ as in (\ref{eqn:y=Vx}).
Using $V$ and the displacement snapshot matrix $\mathcal{Y}$, we compute the reduced displacement snapshot matrix:
\begin{equation}
	\mathcal{X} = V^\mathrm{T} \mathcal{Y}.
	\label{eqn:X}
\end{equation}

\begin{remark}
It is assumed that the snapshot matrices $\mathcal{Y}$ and $\mathcal{U}$ are collected from a single simulation that covers the expected usage scenario of the model.
If this is not practical, then the derived model reduction methodology presented in this work can be easily extended to accommodate multiple snapshot matrices.
\end{remark}

Similar to (\ref{eqn:X}), we define the snapshot matrices $\dot{\mathcal{X}}$ and $\ddot{\mathcal{X}}$ consisting of the velocity and acceleration, respectively.
Depending on the used simulation software, these quantities may not be available directly.
In that case, one can estimate them using, e.g., finite differencing.

Lastly, we introduce (by a slight abuse of notation) the following shorthand notation:
\begin{equation}
	\nabla \left( k^\mathrm{T} \phi(\mathcal{X}) \right) := \begin{bmatrix} \nabla(k^\mathrm{T} \phi(x(t_1)) \cdots \nabla(k^\mathrm{T} \phi(x(t_N)) \end{bmatrix} \in \mathbb{R}^{r \times N}.
\end{equation}

To quantify the discrepancy between the snapshot data and the model structure, often the residual is used in operator inference \cite{peherstorfer2016data, filanova2023operator}.
The residual at time instance $i$ is given by
\begin{equation}
	M x(t_i) + C \dot{x}(t_i) + \nabla \left( k^\mathrm{T} \phi(x(t_i)) \right) - B u(t_i).
\end{equation} 
The cost function can them be composed by, for example, summing the 2-norm of the residual over all time steps:
\begin{equation}
	\sum_{i=1}^{N} \norm{M x(t_i) + C \dot{x}(t_i) + \nabla \left( k^\mathrm{T} \phi(x(t_i)) \right) - B u(t_i)}_2^2.
\end{equation}
Using the Frobenius norm $\norm{ \cdot }_F$ and the snapshot matrices this can be conveniently expressed as
\begin{equation}
	\norm{ M \ddot{\mathcal{X}} + C \dot{\mathcal{X}} + \nabla (k^\mathrm{T} \phi(\mathcal{X})) - B \mathcal{U} }_F^2.
\end{equation}

Using the snapshot matrices and residual-based cost defined above, we are now ready to formulate the operator inference problem in accordance with Theorem \ref{thm:iss_g} guaranteeing that the model is ISS:
\begin{equation}
\begin{aligned}
	\min_{M,C,k,B} \quad & \norm{ M \ddot{\mathcal{X}} + C \dot{\mathcal{X}} + \nabla (k^\mathrm{T} \phi(\mathcal{X})) - B \mathcal{U} }_F^2, \\
	\textrm{s.t.} \quad & M \succ 0, \\
						& C \succ 0, \\
						& k^\mathrm{T} \phi(x) \in \ssos, \\
						& x^\mathrm{T} \nabla( k^\mathrm{T} \phi(x) ) - \varepsilon \norm{x}^2 \in \sos,
\end{aligned}
\label{eqn:opinfstab_asymp}
\end{equation}
with $\varepsilon > 0$ a small number.

An operator inference formulation guaranteeing models with globally uniformly bounded solutions can be similarly defined by modifying the constraints to be compatible with Theorem \ref{thm:lyapstab_g}:
\begin{equation}
\begin{aligned}
	\min_{M,C,k,B} \quad & \norm{ M \ddot{\mathcal{X}} + C \dot{\mathcal{X}} + \nabla (k^\mathrm{T} \phi(\mathcal{X})) - B \mathcal{U} }_F^2, \\
	\textrm{s.t.} \quad & M \succ 0, \\
						& C \succeq 0, \\
						& k^\mathrm{T} \phi(x) \in \ssos.
\end{aligned}
\label{eqn:opinfstab_lyap}
\end{equation}
The operator inference formulations (\ref{eqn:opinfstab_asymp}) and (\ref{eqn:opinfstab_lyap}) are convex problems.
The cost function promotes finding operators such that the dynamical model accurately matches the data.
Meanwhile, the constraints are data-independent and ensure that only models with appropriate stability properties are considered.

Although the reduction process can drastically reduce the number of state variables, the projection matrix $V$ is a dense matrix.
The effect of this is that sparsity structure that is usually present in the operators of (\ref{eqn:model_f_fom}) is lost.
For example, a FEM semi-discretization of a structural dynamics problem typically leads to a sparse mass matrix $M_y$.
In the reduced coordinate space ($M$) this sparsity is lost.
Although the reduction in state variable is often much higher than the additional matrix entries required to store the dense operators, we need to consider that in our case any sparsity that was present in the nonlinear term $f_y(y(t))$ is also lost.
Consider, for example, the case of structural dynamics under large deformation.
In that setting, the nonlinearity $f_y(y(t))$ becomes a sparse cubic polynomial, with the cross-terms $y_i y_j y_l$ only appearing if the FEM nodes $i, j$ and $l$ are neighbours on the mesh \cite[Section 4]{barbivc2005real}.
Similarly to how the mass matrix becomes dense after reduction, we should expect polynomial nonlinearities to become dense.
That is, all possible monomials of the state variables will have a non-zero contribution.

In the next section, we will show that the number of monomials grows rapidly as a function of $r$.
In fact, it grows much more rapidly than the number of matrix entries of the dense operators $M$ and $C$ of (\ref{eqn:model_polynomial}).
Because the number of monomials directly affects the computational complexity of the operator inference problem (\ref{eqn:opinfstab_asymp}), controlling the growth becomes vital to ensure computational scalability of the method.
Hence, the hyperreduction scheme developed in the next section is an important ingredient to achieve this.

\section{Hyperreduction with Sparse Monomial Basis}
\label{sct:hyperreduction}
\subsection{Sparsity for sum-of-squares polynomials}
The number of unique monomials of degree at most $d$ in $r$ variables is given by the binomial coefficient
\begin{equation}
	n_{\phi} = {r + d \choose d}.
\end{equation}
Without loss of generality we can assume that $d$ is even.
Namely, if $d$ were odd, then the highest power term of $k^\mathrm{T} \phi$ would be odd.
However, the corresponding coefficient in $k$ must necessarily be 0 to ensure non-negativity of $k^\mathrm{T} \phi$.
Due to similar reasoning, there can be no linear monomials with non-zero coefficients in the polynomial.
Hence, we can exclude these $r$ terms.
Finally, we may assume that $k^\mathrm{T} \phi(0) = 0$, i.e., the constant monomial is 0.
Let $n_{\phi,\mathrm{full}}$ be the number of monomials in $k^\mathrm{T} \phi(x)$.
Then, after exclusion of the linear monomials and the constant term, we have
\begin{equation}
	n_{\phi,\mathrm{full}} = {r + d \choose d} - r - 1.
\end{equation}
The number of monomials grows quickly as either $r$ or $d$ are increased.
For example, if $d$ is fixed then $n_{\phi,\mathrm{full}} = \mathcal{O}(r^{d-1})$ grows polynomially in $r$.
A large number of monomials slows down both the offline phase (solving the operator inference problem) and the online phase (simulating the ROM).
To combat the growth of the number of monomials, one possibility is to reduce the number of monomials through a sparse selection procedure.
An important aspect is that, due to the sum-of-squares constraints imposed on $k^\mathrm{T} \phi$ and $\nabla ( k^\mathrm{T} \phi )$, the monomials cannot be selected independently.
To illustrate this point, consider the following set of monomials:
\begin{equation}
	\phi_{\mathrm{full}}(x) := \begin{bmatrix} x_1^2 & x_1^3 & x_1^4 \end{bmatrix}^{\mathrm{T}}.
\end{equation}
For the above example, we could discard $x_1^2$ in our sparse selection procedure to arrive at $\phi$ and then determine coefficients $k_1, k_2$ such that $k^\mathrm{T} \phi(x) = k_1 x_1^3 + k_2 x_1^4 \in \sos$.
However, every possible sum-of-squares decomposition requires $k_1 = 0$, because the monomial of lowest degree in a sum-of-squares decomposition must have even degree.
Using a similar argument, discarding $x_1^4$ from $\phi_\mathrm{full}$ will also require us to discard $x_1^3$.

To obtain a sparse selection of monomials that is consistent with a sum-of-squares formulation, we propose instead to work with clusters of the variables that appear jointly in the sum-of-squares decomposition.
That is, we generate all monomials that could appear if we restrict the polynomial $k^\mathrm{T} \phi$ to be a sum-of-squares involving $n_\chi$ polynomials in at most $\Theta \leq r$ variables:
\begin{equation}
k^\mathrm{T} \phi(x) = \sum_{i=1}^{n_\chi} \chi_i( x_{m_{i,1}}, ..., x_{m_{i,\Theta}} )^2,
\label{eqn:kphi_decomp}
\end{equation}
with $\chi_i$, $i = 1, ..., n_\chi$, polynomials (of degree $d/2$) and $m_{i,j} \in \{1, ..., r\}$, $j = 1, ..., \Theta$.
When $\Theta = 2$, then the clustering of variables can be visualized in a graph structure with $r$ vertices (representing the variables) connected by $n_\chi$ edges (representing the $\chi_i$ polynomials). See Figure \ref{fig:sparse_selection}.
For $\Theta \geq 3$, the clustering is related to a hypergraph, with each $\chi_i$ polynomial associated with a $\Theta$-hyperedge.

\begin{figure}
	\centering
	\begin{tikzpicture}
		\node [circle, fill=black!20] (x1) at (0, 0) {$x_1$};
		\node [circle, fill=black!20] (x2) at (3, 0) {$x_2$};
		\node [circle, fill=black!20] (x3) at (0, 3) {$x_3$};
		\node [circle, fill=black!20] (x4) at (3, 3) {$x_4$};
	    \draw (x1) -- node[above] {$\chi_1(x_1, x_2)$} (x2);
	    \draw (x1) -- node[left] {$\chi_2(x_1, x_3)$} (x3);
	    \draw (x3) -- node[above] {$\chi_3(x_3, x_4)$} (x4);
	\end{tikzpicture}
	\caption{The sparse selection of monomials with $\Theta = 2$ can be visualized by an undirected graph. Each edge indicates a clustering of two variables and generates a set of monomials $\chi_i$.}
	\label{fig:sparse_selection}
\end{figure}
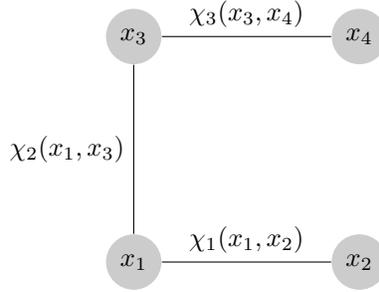

Assuming that for each $\chi_i$ we generate all monomials of degree at most $d$, the number of monomials in decomposition (\ref{eqn:kphi_decomp}) is upper bounded by
\begin{equation}
	n_{\phi,\mathrm{sparse}} := n_\chi \left( {\Theta + d \choose d} - \Theta - 1 \right).
\end{equation}
Note that this is a strict upper bound if any variable $x_i$ belongs to more than 1 cluster, because the powers of $x_i$ will be reduplicated.

If we choose $n_\chi$ and $\Theta$ such that $n_{\phi,\mathrm{sparse}} < n_{\phi,\mathrm{full}}$, then a reduction in the number of monomials of $k^\mathrm{T} \phi(x)$ is achieved.
As a consequence, the computational cost of the online and offline phase is reduced.
However, a careful selection of the clusters should be made to minimize loss in accuracy.
In the next section, we will propose an algorithm to make this selection based on the snapshot data.

\subsection{Clustering algorithm}
A minimum requirement for any clustering is that each variable $x_i$, $i = 1, ..., r$ appears in a polynomial $\chi_j$, $j = 1, ..., n_\chi$ at least once.
In terms of the hypergraph analogy, this means that each vertex should be included in at least 1 hyperedge.
Because each cluster contains $\Theta$ variables, the minimum number of clusters to cover all variables is $n_\chi \geq r / \Theta$.
However, selecting the set of clusters leading to the most accurate ROM without an exhaustive search is far from trivial.
Assuming that POD is used for the dimensionality reduction, we propose an iterative greedy selection algorithm based on ordering the hyperedges by the \emph{relative dominance importance value} as introduced in \cite{martynov2019polynomial} for quadratic and cubic monomials.
This relative dominance importance value is computed for each hyperedge as follows.
First, we associate with each hyperedge the monomial of the $\Theta$ vertices belonging to that hyperedge, e.g., with hyperedge $(x_1, x_2, x_3)$ we associate the monomial $x_1 x_2 x_3$.
Then, we compute for each monomial the relative dominance importance value given by
\begin{equation}
	s_{i_1, ..., i_\Theta} := \prod_{j=1}^{\Theta} { I_j },
	\label{eqn:importance_index}
\end{equation}
where
\begin{equation}
	I_i := \frac{ \sigma_i^2 }{ \sum_{j=1}^{r} \sigma_j^2 }
\end{equation}
and $\sigma_1 \geq \sigma_2 \geq \ldots \geq \sigma_r \geq 0$ are the singular values of the POD basis vectors in $V$.

The relative dominance importance values $s_{i_1, ..., i_\Theta}$ give a heuristic ordering of the importance of all possible clusters with respect to the data.
This ordering will be used to perform the greedy selection algorithm which is detailed in Algorithm \ref{alg:sparse_selection}.
We greedily select clusters from the importance values in two stages.
In the first stage, we ensure that each state variable is selected at least once.
We do this by keeping track of a list of state variables that are not yet part of a selected cluster.
We update this list by, at each iteration, considering only the clusters that have at least one state variable from the list and selecting the cluster with highest importance index.
In the second stage, we select freely the remaining clusters in order of importance until a pre-specified maximum number of monomials has been reached.

\begin{algorithm}
	\caption{Sparse selection of monomials using clustering.}
	\label{alg:sparse_selection}
	\begin{algorithmic}
		\Require Snapshot matrix $\mathcal{X} \in \mathbb{R}^{r \times N}$, maximum number of monomials $n_\phi$, cluster size $\Theta$, maximum monomial degree $d$.
		\Ensure Selected set of monomials $\phi$.
		\State Compute the importance values $s_{i_1, ..., i_\Theta}$ according to (\ref{eqn:importance_index}).		
		\State Initialize selected set of monomials $\phi \leftarrow \{ \}$ and clusters $\mathcal{I} \leftarrow \{ \}$.
		\State \comment{\# Stage 1: greedily select clusters such that each state variable is included at least once}
		\State $x_{\mathrm{open}} \leftarrow \{ 1, \cdots, r \}$.
		\While{$x_{\mathrm{open}} \neq \{ \}$}
			\State Select highest $s_{i_1, ..., i_\Theta}$ with $i_j \in x_{\mathrm{open}}$ for some $j \in \{1, ..., \Theta\}$.
			\State $\mathcal{I} \leftarrow \mathcal{I} \cup \{ \{ i_1, ..., i_\Theta \} \}$.
			\State Generate $\phi_{i_1, ..., i_\Theta}$, the set of all monomials of at most degree $d$ containing variables $x_{i_1}, ..., x_{i_\Theta}$.
			\State $\phi \leftarrow \phi \cup \phi_{i_1, ..., i_\Theta}$.
			\State $x_\mathrm{open} \leftarrow x_\mathrm{open} \setminus \{i_1, ..., i_\Theta \}$.
		\EndWhile
		\State \comment{\# Stage 2: greedily select remaining clusters}
		\While{$| \phi | < n_\phi$}
			\State Select highest $s_{i_1, ..., i_\Theta}$ with $\{i_1, ..., i_\Theta\} \notin \mathcal{I}$.
			\State $\mathcal{I} \leftarrow \mathcal{I} \cup \{ \{ i_1, ..., i_\Theta \} \}$.
			\State Generate $\phi_{i_1, ..., i_\Theta}$, the set of all monomials of at most degree $d$ containing variables $x_{i_1}, ..., x_{i_\Theta}$.
			\State $\phi \leftarrow \phi \cup \phi_{i_1, ..., i_\Theta}$.
		\EndWhile
		
	\end{algorithmic}
\end{algorithm}

\FloatBarrier

\section{Numerical examples}
\label{sct:examples}
In this section, we evaluate the performance of the proposed method in terms of accuracy and computational complexity.
We present two examples.
In Section \ref{sct:corner_brace}, we consider a 2D FEM model of a corner brace and in Section~\ref{sct:pistonrod}, we consider a 3D FEM model of a piston rod. For the numerical experiments, the following set-up is used:
\begin{itemize}
	\item All software is run on a computer with an Intel i7-9750H CPU and 32 GB of RAM.
	\item The simulation of the FEM models is done using the package \textsc{dolfinx} (version 0.7.0) in Python 3.10.12.
	\item The operator inference problem is solved using the MOSEK solver (version 10.1.20) and \textsc{YALMIP} (version 20230622) in MATLAB R2024a.
\end{itemize}

\subsection{Example 1: corner brace} \label{sct:corner_brace}
We investigate the reduced-order deformation modelling of a rubber corner brace.
From the many computational models for rubbers available in literature (see, e.g., \cite{ali2010review}), we select the Neo-Hookean model.
The motivation for this choice is that it matches the numerical example of \cite{martynov2019polynomial}, providing a better comparison.
The geometry and discretization of the spatial domain can be seen in Figure \ref{fig:cornerbrace_mesh}.
The total number of nodes in the mesh is 577.
A homogeneous Dirichlet boundary condition is applied to the 8 nodes on the bottom horizontal edge of the brace ($y = 0$).
Hence, the 569 non-Dirichlet nodes lead to a full-order state-space representation of size 1,138.
For the mechanical loading of the model, we impose a vertical load on the upper-left vertical edge of the model.
The magnitude of this load is time-varying and we select 2 profiles which we use for inference and validation, respectively:
\begin{equation}
	u_{\mathrm{inf}}(t) = 4 \sin ( 0.2 \pi t),
	\label{eqn:cornerbrace_uinf}
\end{equation}
and
\begin{equation}
	u_{\mathrm{val}}(t) = 2.5 \sin ( (0.1 + 0.1 \cos(t)) t ).
	\label{eqn:cornerbrace_uval}
\end{equation}
In Figure~\ref{fig:cornerbrace_deformed} we visualize the Dirichlet boundary condition and loading term on the corner brace model in a deformed state.
\begin{figure}[!htb]
	\centering
	\begin{subfigure}[b]{0.45\textwidth}
		\centering
		\includegraphics[width=\textwidth]{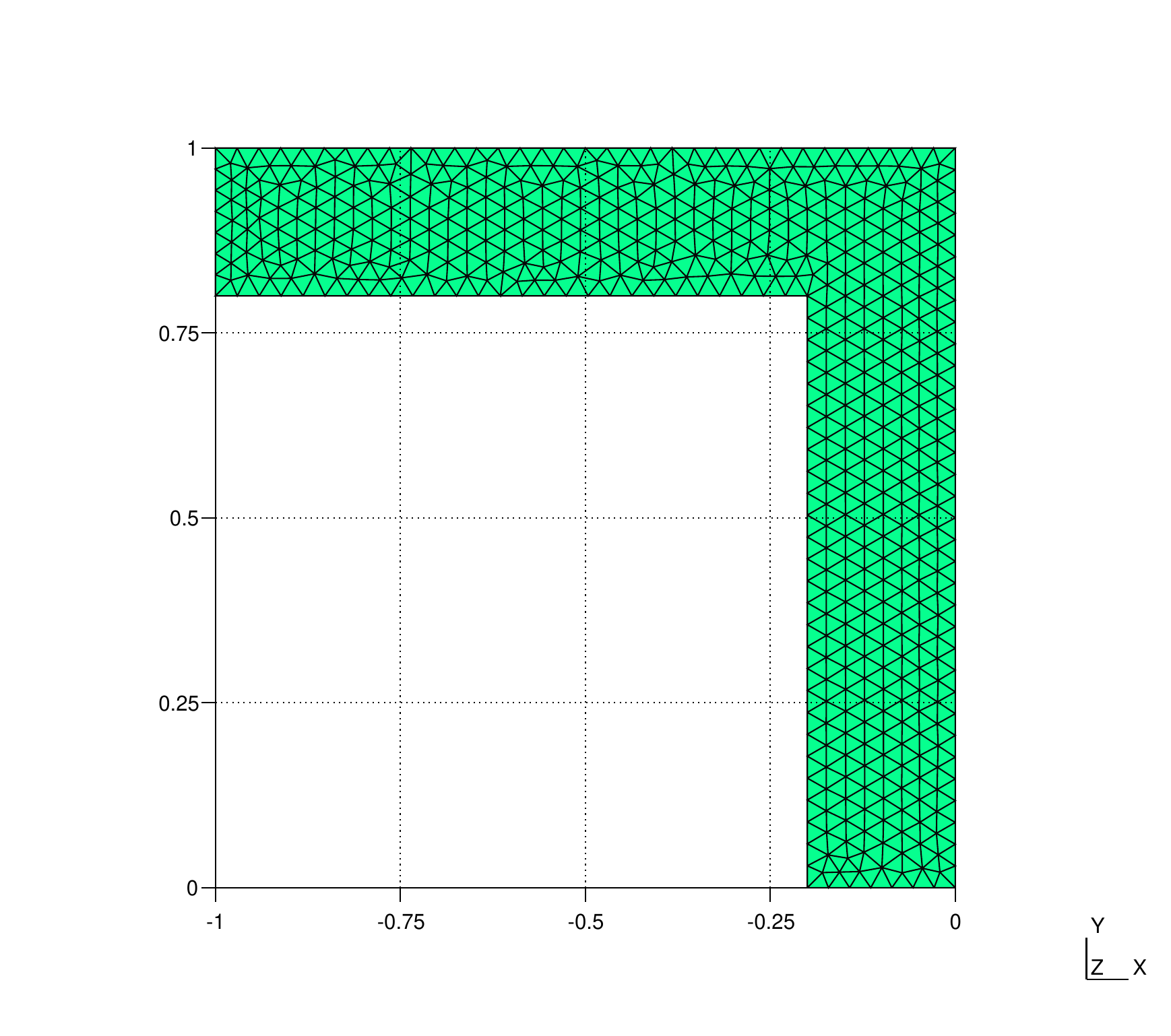}
		\caption{Geometry and mesh of the undeformed corner brace model.}
		\label{fig:cornerbrace_mesh}
	\end{subfigure}
	\hfill
	\begin{subfigure}[b]{0.45\textwidth}
		\centering
		\includegraphics[width=0.5\textwidth]{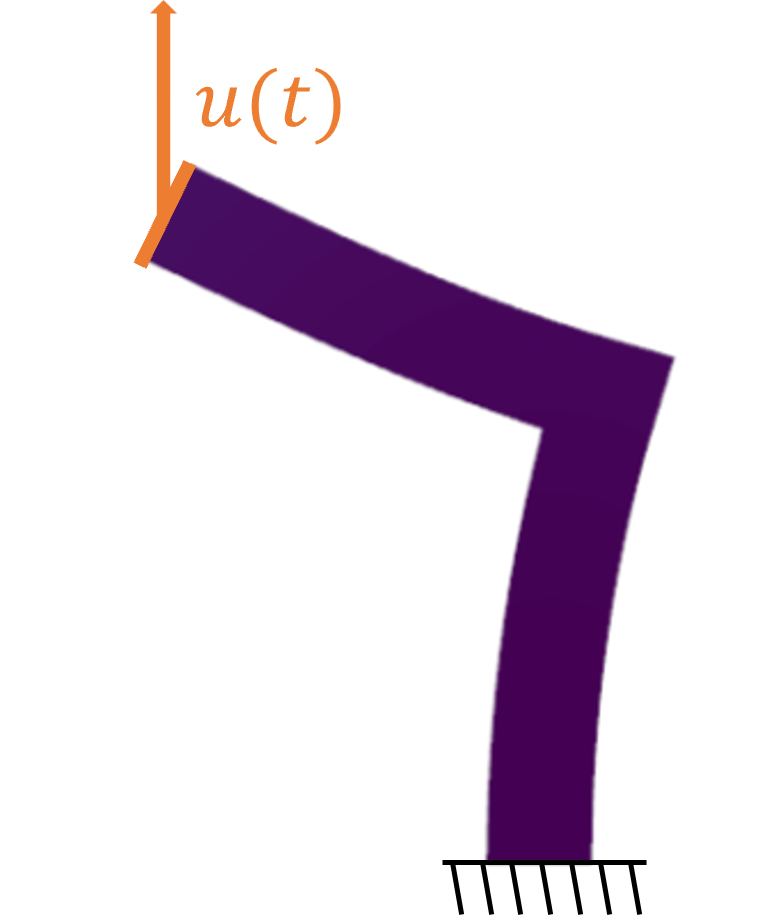}
		\caption{Deformed corner brace model with visualization of the forcing term $u(t)$ and Dirichlet boundary condition.}
		\label{fig:cornerbrace_deformed}
	\end{subfigure}
	\caption{Geometry, mesh and boundary conditions of the corner brace example.}
	\label{fig:cornerbrace}
\end{figure}
Each of the 2 simulations are performed in the time interval $[0, 20]$ seconds.
Within this interval we sample 200 snapshots of the state at regular intervals.
Based on the snapshot dataset obtained with the inference profile (\ref{eqn:cornerbrace_uinf}), we construct a reduced-order model using the following sets of parameters:
\begin{enumerate}
	\item Using POD, we determine a reduced state basis $V$ of dimension $r \in \{2, 3, 7\}$ and construct the reduced state snapshot matrix (\ref{eqn:X}).
	\item For the monomial set $\phi$, we use the following parameter sets:
	\begin{enumerate}
		\item Monomial degree $d \in \{ 2, 4 \}$. Note that $d = 2$ corresponds to linear ROMs.
		\item For the ROMs with degree $d = 4$ and dimension $r > 2$ we use Algorithm \ref{alg:sparse_selection} to determine a sparse selection of monomials. We use $n_\chi = r - 1$ and $\Theta = 2$.
	\end{enumerate}
	\item For each combination of $r$ and the monomial set $\phi$, we solve the optimization problem (\ref{eqn:opinfstab_lyap}) to find the operators $M$, $C$, $B$, and $k$.
\end{enumerate}
The above procedure leads to 8 ROMs in total.
We compare the ROMs based on the reconstruction error of the full state.
That is, we first simulate the inferred models using the validation input (\ref{eqn:cornerbrace_uval}).
Then, we compute the error of the models on this validation dataset as the normalized mean of the reconstruction error evaluated at $N = 200$ equally spaced time samples $t_i$, i.e.:
\begin{equation}
	\mathrm{err}_\mathrm{val} := \frac{ \sum_{i=1}^{N} \norm{ y(t_i) - V \tilde{x}(t_i) }_2 }{ \sum_{i=1}^{N} \norm{ y(t_i) }_2 },
	\label{eqn:accuracy}
\end{equation}
where $\tilde{x}$ denotes the state of the inferred models.
We also compute in this way the error for the inference dataset ($\mathrm{err}_\mathrm{inf}$) analogous to (\ref{eqn:accuracy}):
\begin{equation}
	\mathrm{err}_\mathrm{inf} := \frac{ \sum_{i=1}^{N} \norm{ y(t_i) - V \tilde{x}(t_i) }_2 }{ \sum_{i=1}^{N} \norm{ y(t_i) }_2 }.
	\label{eqn:accuracy_inf}
\end{equation}

For each run, we also measure computational time of two main procedures.
First, we compute the time $t_\mathrm{inf}$ used to solve the operator inference problem (\ref{eqn:opinfstab_lyap}). For ROMs for which Algorithm \ref{alg:sparse_selection} is used, its running time is included in $t_\mathrm{inf}$ as well.
Second, the time $t_\mathrm{sim}$ used to simulate the ROM on the validation dataset.

The results are reported in Table \ref{tbl:cornerbrace}.
By inspecting the validation error ($\mathrm{err}_{\mathrm{val}}$), it becomes obvious that all linear ROMs fail to capture the nonlinear behaviour of the model.
Increasing the number of reduced basis vectors has no effect on the error.
This confirms the need for a nonlinear ROM for this example.
The nonlinear ROMs achieve similar error on the inference dataset and display good generalization capabilities on the validation dataset.
In fact, the nonlinear ROMs with $r = 3$ achieve similar validation error ($\mathrm{err}_\mathrm{val}$) to the nonlinear ROMs with $r = 7$.
This indicates that the dynamics contained in the snapshot data is already well-represented by a 3-dimensional basis.
If we increase $r$, the validation error is significantly higher if we do not use a sparse selection of monomials, indicating the usefulness of the proposed sparse selection strategy.
When $r = 7$, the fast growth in the number of monomials of degree at most 4 (322) becomes apparent.
This results in a significant difference in computational time (both inference and simulation) between the sparse and non-sparse ROMs.

\begin{table}
	\centering
	\begin{tabular}{|c|c|c|c|r|r|r|r|} \hline
	$r$ & $d$ 	& $\Theta$ 	& $n_\phi$ 	& $\mathrm{err}_\mathrm{inf}$ 	& $\mathrm{err}_\mathrm{val}$ 	& $t_\mathrm{inf}$ (s) & $t_\mathrm{sim}$ (s) \\ \hline
	2 	& 2 		& - 			& 3 			& \num{3.01e-1} 					& \num{3.30e-1} 					& \num{1.68e0} 	& \num{3.11e-2} \\
	3 	& 2 		& - 			& 6 			& \num{3.04e-1} 					& \num{3.45e-1} 					& 0.95 			& \num{2.87e-2} \\
	7 	& 2 		& - 			& 28 		& \num{2.94e-1}					& \num{3.45e-1} 					& \num{1.56e0} 	& \num{4.84e-2} \\
	2 	& 4 		& - 			& 12 		& \num{2.14e-2} 					& \num{6.54e-2} 					& \num{1.06e0} 	& \num{7.21e-2} \\
	3 	& 4 		& - 			& 31 		& \num{1.05e-2} 					& \num{4.73e-2} 					& \num{1.02e0} 	& \num{9.41e-2} \\
	3 	& 4 		& 2 			& 21 		& \num{1.69e-2} 					& \num{5.47e-2} 					& \num{1.12e0} 	& \num{6.89e-2} \\
	7 	& 4 		& - 			& 322 		& \num{6.10e-2} 					& \num{4.16e-2} 					& \num{7.54e0} 	& \num{2.57e0} \\
	7 	& 4 		& 2 			& 57 		& \num{2.31e-2} 					& \num{6.05e-2} 					& 2.12 			& \num{2.22e-1} \\ \hline
	\end{tabular}
	\caption{Results for the corner brace numerical example. A ``-" for $\Theta$ indicates no sparse selection of monomials was used.}
	\label{tbl:cornerbrace}
\end{table}

\subsubsection{Reduced-data setting}
Because operator inference is a data-driven technique, the accuracy of the ROMs depends on the amount of data that is supplied in the inference stage.
To investigate this dependence, we repeated the inference of nonlinear ROMs for $r = 3$ but discarded the last 105 samples of the inference dataset.
We compare three ROMs: 1) without stability constraints; 2) with stability constraints; and 3) with stability constraints and sparsity.
For each ROM, we compute the error on the validation dataset and we plot the x-displacement over time of the node on the top-left corner.

The results are shown in Figure~\ref{fig:cornerbrace_reduced_data}.
The ROM without stability constraints (Figure~\ref{fig:cornerbrace_reduced_data_nonstab}) exhibits strong oscillations after approximately 9 seconds.
As a result, the associated validation error is high (15).
Adding stability constraints (Figure~\ref{fig:cornerbrace_reduced_data_stab}) shows a significant decrease in error to 0.13.
This highlights the regularizing function of the stability constraints: in the reduced-data regime these constraints are vital to inferring ROMs that behave physically accurate.
The error is decreased further to 0.10 after employing the clustering-based sparsity approach (Figure~\ref{fig:cornerbrace_reduced_data_stab_sparse}).

\begin{figure}
	\centering
	\begin{subfigure}[b]{0.45\textwidth}
		\centering
		\includegraphics[width=\textwidth]{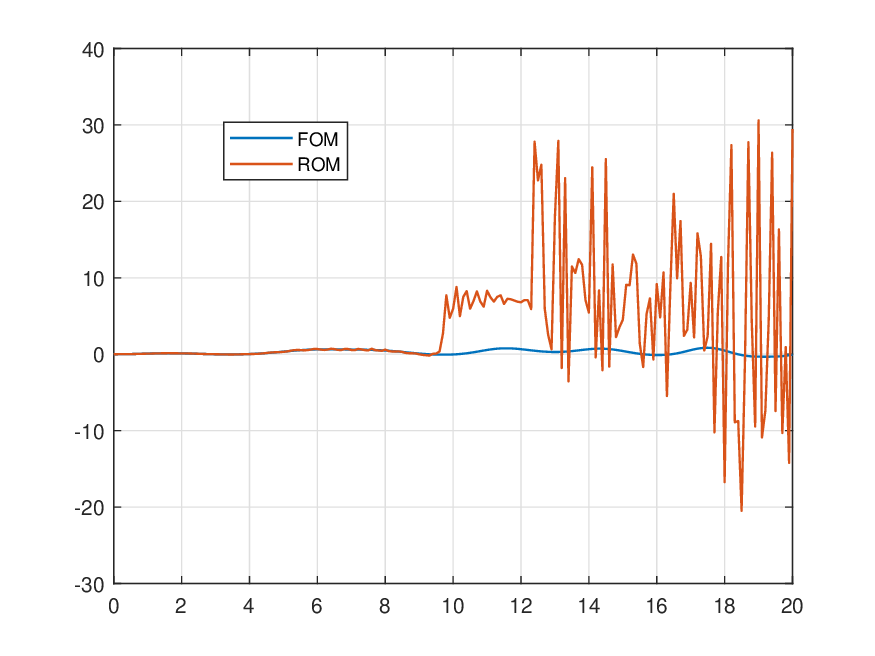}
		\caption{Without stability constraints ($\mathrm{err}_\mathrm{val} = 15$).}
		\label{fig:cornerbrace_reduced_data_nonstab}
	\end{subfigure}
	\begin{subfigure}[b]{0.45\textwidth}
		\centering
		\includegraphics[width=\textwidth]{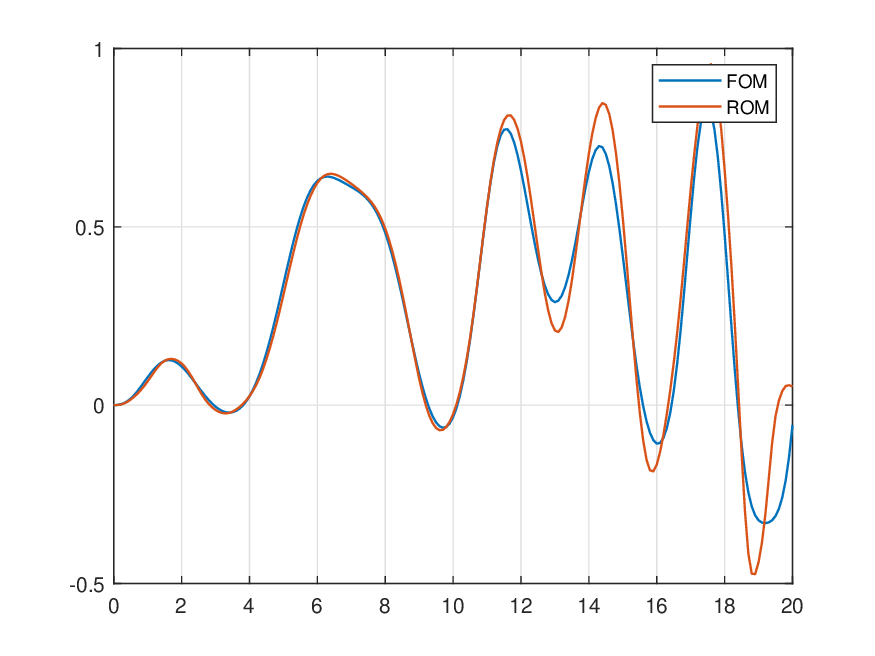}
		\caption{With stability constraints ($\mathrm{err}_\mathrm{val} = 0.13$).}
		\label{fig:cornerbrace_reduced_data_stab}
	\end{subfigure}
	\begin{subfigure}[b]{0.45\textwidth}
		\centering
		\includegraphics[width=\textwidth]{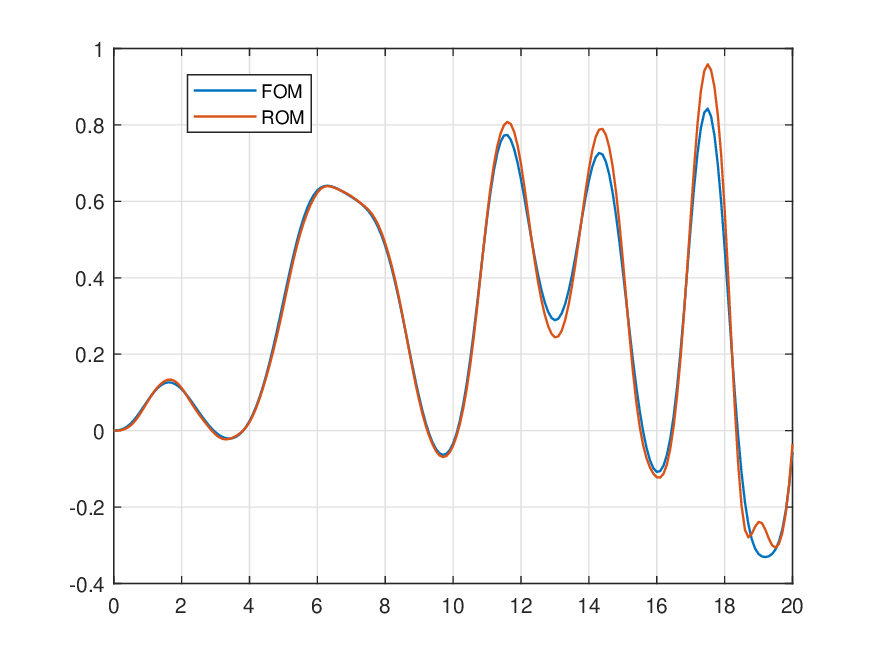}
		\caption{With stability constraints and sparsity ($\mathrm{err}_\mathrm{val} = 0.10$).}
		\label{fig:cornerbrace_reduced_data_stab_sparse}
	\end{subfigure}
	\caption{ROM performance in the reduced data setting.}
	\label{fig:cornerbrace_reduced_data}
\end{figure}

\subsection{Example 2: piston rod}\label{sct:pistonrod}

We consider a 3D model of a steel piston rod as shown in Figure \ref{fig:pistonrod_geometry}.
The example is based on \cite[Section 6.2]{martynov2019polynomial}.
We also use the same material parameters: Young's modulus $E = \SI{210}{\giga\pascal}$, Poisson's ratio $\nu = 0.3$ and mass density $\rho = \SI{7860}{\kg\per\m\cubed}$.
After discretization, the model has 4686 degrees-of-freedom.
The inner surface of the hole on the right-hand side is fixed with a Dirichlet boundary condition.
The inner surface of the hole on the left-hand side is loaded with a surface traction $u(t)$ which is everywhere parallel with the z-axis, i.e., perpendicular to the plane of the rod.
Torsion is generated by dividing the inner surface of the hole on the left-hand side into an upper and a lower part and prescribing a surface traction on each part with opposite sign.
Namely, the surface traction on the upper half of the inner surface is directed upward, while the surface traction on the lower half is directed downward.
The magnitude of the load for inference and validation is given by:
\begin{equation}
	u_{\mathrm{inf}}(t) = 4.0 \sin ( 200 \pi t) \, \si{\giga\pascal},
	\label{eqn:piston_rod_uinf}
\end{equation}
and
\begin{equation}
	u_{\mathrm{val}}(t) = 4.0 \sin ( 300 \pi t ) \, \si{\giga\pascal}.
	\label{eqn:piston_rod_uval}
\end{equation}
The simulations are performed in the time interval $t \in [0, 0.020]$ seconds.

\begin{figure}
	\centering
	\begin{subfigure}[t]{0.45\textwidth}
		\includegraphics[width=\textwidth]{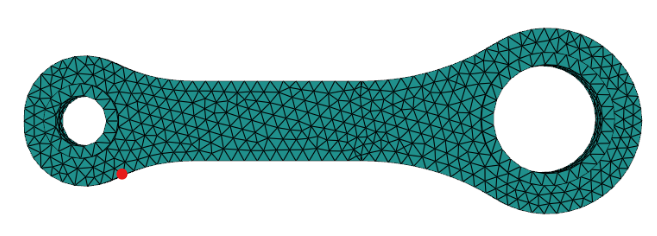}
		\caption{Undeformed. The red dot marks the degree-of-freedom for which time-domain plots are presented in Figure~\ref{fig:pistonrod_results}.}
		\label{fig:pistonrod_undeformed}
	\end{subfigure}
	\begin{subfigure}[t]{0.45\textwidth}
		\includegraphics[width=\textwidth]{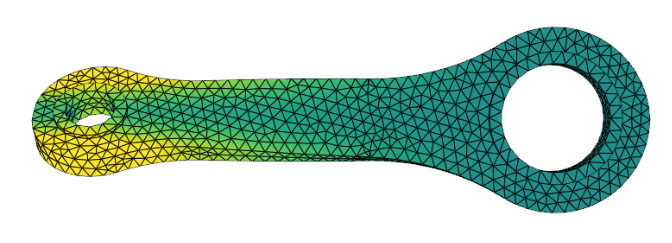}
		\caption{Deformed.}
		\label{fig:pistonrod_deformed}
	\end{subfigure}
	\caption{Geometry and mesh of the piston rod model.}
	\label{fig:pistonrod_geometry}
\end{figure}

The procedure to compute ROMs for this example is similar to the procedure for the corner brace model described in Section~\ref{sct:corner_brace}.
The only difference is in the parameter ranges: for the order of the ROMs we use $r \in \{3, 4, 5\}$, and for the sparse selection (Algorithm~\ref{alg:sparse_selection}) we use $\Theta \in \{2, 3\}$.

The results are reported in Table~\ref{tbl:pistonrod}.
Note that the combination $r = 3, \Theta = 3$ corresponds to the scenario that no sparse selection is used (indicated by $\Theta = $ ``$-$" in the table).
As for the corner brace model, none of the linear ROMs ($d = 2$ in Table~\ref{tbl:pistonrod}) capture the nonlinear dynamics well.
The nonlinear ROMs ($d = 4$ in Table~\ref{tbl:pistonrod}) all achieve inference error below $\num{0.08}$ and validation error below $\num{0.1}$.
The lowest validation error is achieved for $r = 4$ without sparsity.
Increasing $r$ to $5$ decreases inference error from $\num{3.22e-2}$ to $\num{1.28e-2}$ but the validation error is increased, indicating that overfitting occurs at this point.

In Figure~\ref{fig:pistonrod_results} we show time-domain plots of the x-displacement of the point-of-interest of the piston rod (marked by the red dot in Figure~\ref{fig:pistonrod_undeformed}).
For this particular degree-of-freedom, it can be observed that the ROM containing all polynomial terms ($\Theta = 4$) more closely follows the FOM than the 2 ROMs with sparsification applied ($\Theta = 3$ and $\Theta = 2$).
This is in line with the validation error associated with these ROMs given in Table~\ref{tbl:pistonrod}.
The sparsification further reduces the size (and simulation time) of the ROMs but at the expense of a drop in accuracy.

\begin{table}
	\centering
	\begin{tabular}{|c|c|c|c|r|r|r|r|} \hline
	$r$ & $d$ 	& $\Theta$ 	& $n_\phi$ 	& $\mathrm{err}_\mathrm{inf}$ 	& $\mathrm{err}_\mathrm{val}$ 	& $t_\mathrm{inf}$ (s) & $t_\mathrm{sim}$ (s) \\ \hline
	3 	& 2 		& - 			& 6 			& \num{4.13e-1} 					& \num{2.59e-1} 					& \textbf{1.00} & \textbf{\num{2.39e-2}} \\
	4 	& 2 		& - 			& 10 		& \num{3.83e-1} 					& \num{2.35e-1} 					& 1.23 	& \num{2.45e-2} \\
	5 	& 2 		& - 			& 15 		& \num{3.78e-1}					& \num{2.33e-1} 					& 1.33 	& \num{4.23e-2} \\
	3 	& 4 		& - 			& 31 		& \num{1.08e-1} 					& \num{7.92e-2} 					& 1.33 	& \num{5.31e-2} \\
	3 	& 4 		& 2 			& 21 		& \num{7.00e-2} 					& \num{9.19e-2} 					& 1.36 	& \num{3.72e-2} \\
	4 	& 4 		& - 			& 65 		& \num{3.22e-2} 					& \textbf{\num{4.03e-2}} 		& 2.27 	& \num{6.30e-2} \\
	4	& 4		& 3			& 50			& \num{3.63e-2}					& \num{5.18e-2}					& 3.17  & \num{4.92e-2} \\
	4 	& 4 		& 2 			& 30 		& \num{7.07e-2} 					& \num{1.03e-1} 					& 2.05 	& \num{4.30e-2} \\
	5 	& 4 		& - 			& 120 		& \textbf{\num{1.28e-2}}			& \num{7.24e-2} 					& 4.32 	& \num{3.58e-1} \\
	5 	& 4 		& 3 			& 69 		& \num{2.65e-2} 					& \num{4.70e-2} 					& 4.22 	& \num{8.31e-2} \\ 
	5 	& 4 		& 2 			& 39 		& \num{7.18e-2} 					& \num{9.14e-2} 					& 2.20 	& \num{7.14e-2} \\ 
	\hline
	\end{tabular}
	\caption{Results for the piston rod numerical example. A ``-" for $\Theta$ indicates no sparse selection of monomials was used.}
	\label{tbl:pistonrod}
\end{table}

\begin{figure}
	\centering
	\begin{subfigure}[t]{0.45\textwidth}
		\includegraphics[width=\textwidth]{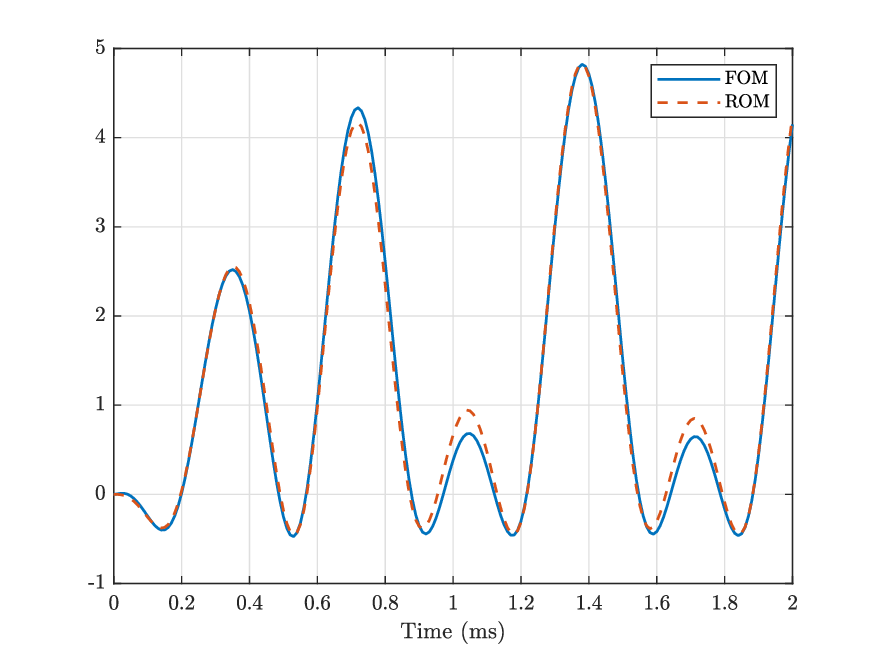}
		\caption{$\Theta = 4$, i.e., no sparsity ($n_\phi = 65$).}
	\end{subfigure}
	\begin{subfigure}[t]{0.45\textwidth}
		\includegraphics[width=\textwidth]{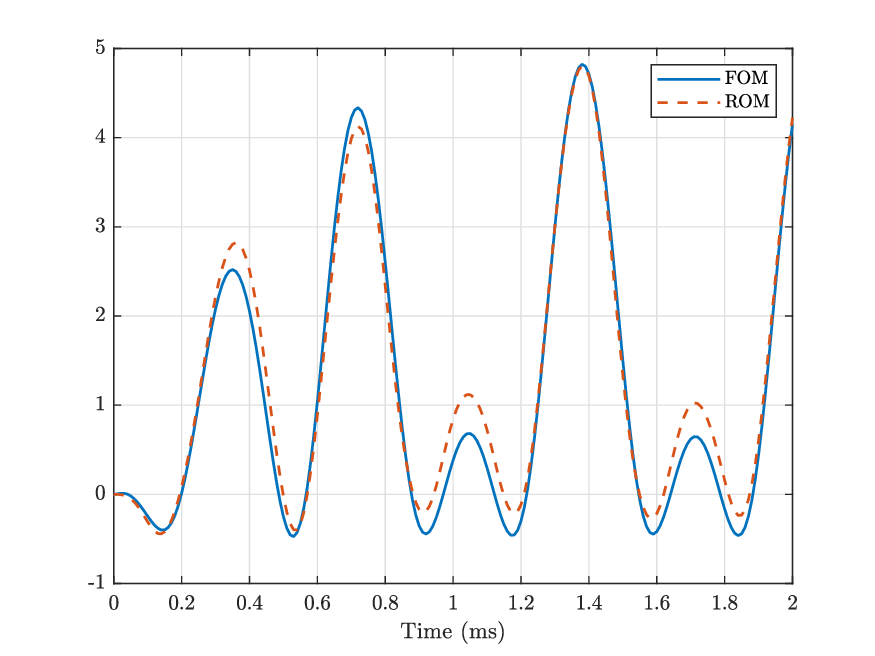}
		\caption{$\Theta = 3$ ($n_\phi = 50$).}
	\end{subfigure}
	\begin{subfigure}[t]{0.45\textwidth}
		\includegraphics[width=\textwidth]{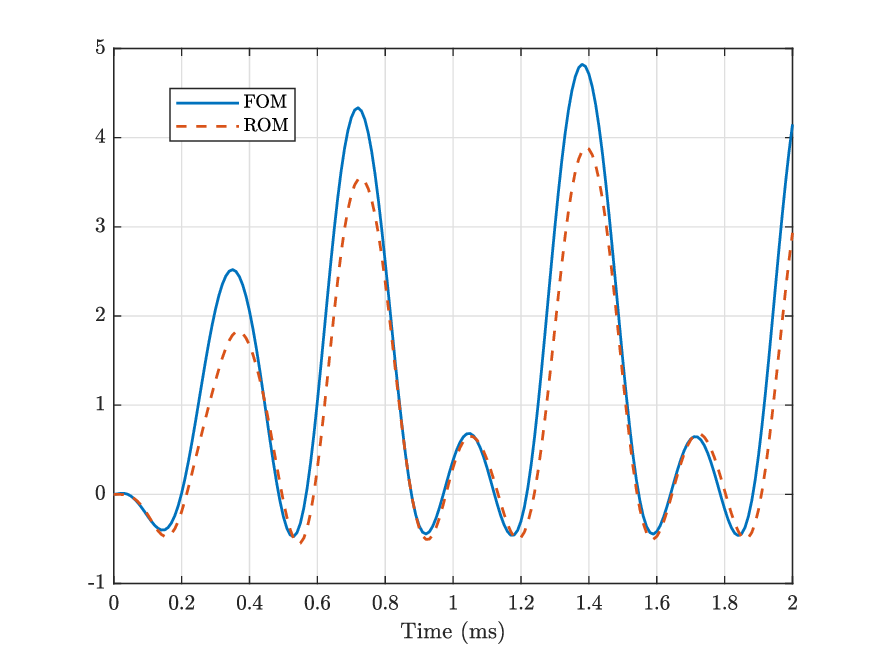}
		\caption{$\Theta = 2$ ($n_\phi = 30$).}
	\end{subfigure}
	\caption{Nonlinear ROM performance for $r = 4$ for different cluster size $\Theta$ and resulting number of monomials $n_\phi$. The plot shows the x-displacement of the point-of-interest marked in Figure~\ref{fig:pistonrod_undeformed}.}
	\label{fig:pistonrod_results}
\end{figure}

\section{Conclusion}
\label{sct:conclusion}
In this paper, a novel operator inference formulation for model reduction of nonlinear structural dynamics is developed.
By combining a sum-of-squares parametrization and a clustering-based sparse selection procedure, a flexible and efficient representation of models is developed.
Because of the approximation capabilities of polynomials, representative (reduced-order) models of dynamics governed by non-polynomial nonlinearities can be found.
This is illustrated by two numerical studies involving FEM models with nonlinear material laws and undergoing large deformation.
It was shown that the proposed method can effectively find ROMs satisfying stability properties, whereas a linear ROM or a ROM discovered without such stability constraints would not be accurate.

Although the method is promising, additional possibilities for improvement remain open for future work.
For example, the proposed hyperreduction scheme of Section \ref{sct:hyperreduction} could be further analysed to determine its theoretical properties.
Based on such an analysis, aspects such as stopping criteria or determining optimal cluster size could be investigated.
An additional point that deserves further research is the generalization to nonlinear structural dynamics involving non-constant mass or damping matrices (such as time-varying mass or velocity-dependent damping).
Such an extension would require finding suitable candidate Lyapunov functions that allows for an extension of the operator inference formulation in the convex setting.
Third, the stability analysis of Section \ref{sct:stabopinf} and subsequent embedding in the operator inference context could be extended to other notions of stability, such as incremental stability.

\bibliographystyle{elsarticle-num}
\bibliography{references}

\end{document}